\newcommand\subsubsec[1]{\medskip\noindent\textbullet~\emph{#1.}~}
\newcommand\arXiv[1]{\href{http://arxiv.org/abs/#1}{arXiv:#1}}
\def\Gchiral{G^+}
\def\Phiplus{\Phi^+}
\def\arc{\stackrel{\textstyle\frown}}
\def\bp{{\bar\partial}}
\def\pa{\partial}
\def\sm{\setminus}
\def\wh{\widehat}
\def\wt{\widetilde}
\def\ve{\varepsilon}
\def\hol{ \mathrm{hol}}
\def\Aut{ \mathrm{Aut}}
\def\id{ \mathrm{id}}
\def\SLE{ \mathrm{SLE}}
\def\Im{ \mathrm{Im}}
\DeclareMathOperator{\Sing}{Sing}
\def\FF{\mathcal{F}}
\def\LL{\mathcal{L}}
\def\VV{\mathcal{V}}
\def\XX{\mathcal{X}}
\def\YY{\mathcal{Y}}
\def\C{\mathbb{C}}
\def\E{\mathbf{E}}
\def\H{\mathbb{H}}
\def\P{\mathbf{P}}
\def\R{\mathbb{R}}
\def\S{\mathbb{S}}
\theoremstyle{plain}
\newtheorem*{thm*}{Theorem}
\newtheorem{thm}{Theorem}[section]
\newtheorem{lem}[thm]{Lemma}
\newtheorem{prop}[thm]{Proposition}
\theoremstyle{definition}
\newtheorem*{eg*}{Example}
\newtheorem*{egs*}{Examples}
\newtheorem*{def*}{Definition}
\theoremstyle{remark}
\newtheorem*{rmk*}{Remark}
\newtheorem*{rmks*}{Remarks}
\numberwithin{equation}{section}
\begin{document}
\title[CFT of dipolar SLE(4) with mixed boundary condition]{Conformal field theory of dipolar SLE(4) with mixed boundary condition}

\author{Nam-Gyu Kang}
\address{Department of Mathematical Sciences, 
Seoul National University, 
Seoul 151-747, Republic of Korea}
\email{nkang@snu.ac.kr}
\thanks{The author was partially supported by NRF grant 2010-0021628. 
The author also holds joint appointment in the Research Institute of Mathematics, Seoul National University. \vspace{.5in}}

\date{}

\subjclass[2010]{Primary 60J67, 81T40; Secondary 30C35}
\keywords{dipolar conformal field theory, martingale-observables, dipolar SLE}

\begin{abstract}
We develop a version of dipolar conformal field theory in a simply connected domain with the Dirichlet-Neumann boundary condition and central charge one.
We prove that all correlation functions of the fields in the OPE family of Gaussian free field with a certain boundary value are martingale-observables for dipolar SLE(4).
\end{abstract}

\maketitle

\section{Introduction}
In this paper we consider a version of dipolar conformal field theory with central charge one $(c=1)$ in a simply connected domain $D$ with two marked boundary points $q=q_-, r=q_+.$
More precisely, the theory we develop is based on a certain family of fields generated by the Gaussian free field in $D$ with the Dirichlet boundary condition on $\arc{qr}$ and the Neumann boundary condition on the other boundary arc.
We prove that the correlation functions of fields in this family under the insertion of a certain boundary condition changing operator form a collection of martingale-observables for dipolar SLE(4).

We apply definitions and theories developed in \cite{KM11} and \cite{KM12} to a different
conformal setting with a different boundary condition.
In the chordal case (\cite{KM11}) and the radial case (\cite{KM12}), we consider a simply connected domain $D$ with a marked boundary point and a marked interior point, respectively.
Both theories are based on the Gaussian free field with the Dirichlet boundary condition.
However, their central charge modifications are different.
In this paper we do not discuss the central charge modification of dipolar conformal field theory with mixed boundary condition.

We also explain the differences between the conformal field theory we consider in this paper and the other theories in \cite{KM11} and \cite{KM12}.
Unlike the chordal and the radial cases, the current field (the derivative of Gaussian free field) is real on the boundary arc with the Neumann boundary condition.
(As in the chordal and the radial cases, it is purely imaginary on the boundary arc with the Dirichlet boundary condition.)
However, the Virasoro field is real on the whole boundary and Ward's equations have the similar forms as in the chordal and the radial cases.
On the other hand, the Virasoro field has a double pole at each of the marked boundary points.
In the radial case (\cite{KM12}), it has a double pole at a marked interior point for $c<1$ and  has no singularity at this point for $c=1.$
In the forthcoming paper (\cite{KT13}), we study a dipolar conformal field theory $(c\le1)$ with the Dirichlet boundary condition and relate it to dipolar $\SLE(\kappa).$

\medskip \section{Main result}

\subsection{Definition of dipolar SLE} For a simply connected domain $(D,p,q_-,q_+)$ with three marked boundary points $p,q=q_-,r=q_+$ (a point $p$ is in the positively oriented boundary arc $\arc{qr}$ from $q$ to $r$), dipolar Schramm-Loewner evolution (SLE) in $(D,p,q_-,q_+)$ with a positive parameter $\kappa$ is the conformally invariant law on random curves from $p$ to $\arc{rq}$ described by the solution $g_t(z)$ (which exists up to a stopping time $\tau_z\in(0,\infty]$) of the dipolar Loewner equation
\begin{equation} \label{eq: g}
\partial_t g_t(z) = \frac1{\tanh((g_t(z)-\xi_t)/2)}, \quad (\xi_t = \sqrt\kappa B_t).
\end{equation}
Here, $B_t$ is a standard Brownian motion with $B_0=0$ and $g_0$ is a given conformal map from $D$ onto the strip $\S:=\{z = x+ iy\,|\, 0 < y < \pi\}$ such that $g_0(p) = 0,$ $ g_0(q_\pm)=\pm\infty.$
Then for all $t,$
$$w_t(z):=g_t(z)-\xi_t$$
is a well-defined conformal map from
$$D_t := \{z \in D: \tau_z>t\}$$
onto the strip $\S.$
The \emph{dipolar SLE curve} $\gamma$ is defined by the equation
$$\gamma_t \equiv \gamma(t) := \lim_{z\to0} w_t^{-1}(z).$$
Then $w_t$ is a random conformal map from $(D_t,\gamma_t,q_-,q_+)$ onto $(\S,0,-\infty,\infty).$
See \cite{Zhan04} for the basic properties of dipolar SLE.

\medskip \subsection{Definition of martingale-observables and OPE family}
We say that a non-random conformal field $M$ of $n$ variables in $\S$ is a \emph{martingale-observable} for dipolar $\SLE(\kappa)$ if for any $z_1,\cdots ,z_n\in D,$ the process
$$M_t(z_1,\cdots, z_n)=(M_{D_t,\gamma_t,q_-,q_+}\,\|\,\id)(z_1,\cdots, z_n) = (M\,\|\,w_t^{-1})(z_1,\cdots, z_n)$$
(stopped when any $z_j$ exits $D_t$) is a local martingale on SLE probability space.
(Note that $\Aut(D,p,q_-,q_+)$ is a trivial group.)

We denote by $\FF$ the \emph{OPE family} of the Gaussian free field $\Phi$ with mixed boundary condition, which is the algebra (over $\C$) spanned by $1,\pa^j\bp^k\Phi$ and $\pa^j\bp^k e^{*\alpha \Phi}\,(\alpha\in\C)$ under the OPE multiplication.
See \cite[Section~3.2]{KM11} for the definition of OPE multiplication and Subsection~\ref{ss: V and T} below for the definition of $e^{*\alpha \Phi}.$

\medskip \subsection{Statement of main theorem}
We prove that the correlation functions of fields in $\FF$ under the insertion of Wick's part of a certain boundary vertex field are martingale-observables for dipolar SLE(4).
A version of the following theorem approached from physics perspective appeared in \cite{BBH05}.

\begin{thm} \label{main}
For any fields $X_j$ in $\FF,$ the non-random field
\begin{equation} \label{eq: main}
\E\,e^{\odot ia \Phi^+(p,q_-)}\, X_1(z_1)\cdots X_n(z_n),\qquad (a = \frac1{\sqrt2})
\end{equation}
is a martingale-observable for dipolar $\SLE(4).$
\end{thm}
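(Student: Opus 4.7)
My plan is to show that $M_t(z_1,\ldots,z_n) := (M\,\|\,w_t^{-1})(z_1,\ldots,z_n)$, the pullback of the right-hand side of \eqref{eq: main} under the evolving dipolar Loewner map, is a local martingale. Applying It\^o's formula along the dipolar Loewner flow with Brownian driver $\xi_t = 2 B_t$, and using conformal covariance to transport everything to the standard strip $\S$ with insertion point at $0$, the semimartingale decomposition of $M_t$ has a drift of the form $(\mathcal{L}_{v_t} + 2\,\pa_\xi^2)\mathcal{M}$, where $v_t(z) = 1/\tanh((z-\xi_t)/2)$ is the dipolar Loewner vector field and $\mathcal{L}_{v_t}$ denotes the Lie derivative acting on the (conformally transported) correlation function, with $V := e^{\odot ia\Phiplus(p,q_-)}$ regarded as a primary boundary insertion at $p$.

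The heart of the argument is to rewrite this drift through Ward's equations for the dipolar CFT with mixed boundary condition, in the spirit of \cite{KM11,KM12}. Concretely, one expresses $\mathcal{L}_{v_t}\E\,V(p) X_1(z_1)\cdots X_n(z_n)$ as a contour integral of $v_t$ against insertions of the Virasoro field $T$ around the singular points of $v_t$. I would set this identity up carefully, keeping in mind that $T$ has double poles at \emph{both} marked boundary points $q_\pm$ in addition to the insertion $p$: the double poles at $q_\pm$ produce scaling/translation terms which should cancel against the $\Aut(\S)$-normalization of the transported vertex insertion, while the double pole at $p$ yields the Virasoro mode $L_{-2}$ acting on $V$. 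The $\pa_\xi^2$ contribution of the drift, via $L_{-1} V$ being the $\xi$-derivative of $V$, produces $L_{-1}^2 V$.

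Matching singular terms, the vanishing of the drift reduces to the level-two degeneracy identity
\[
\bigl(L_{-2} - \tfrac{\kappa}{4}\, L_{-1}^2\bigr) V = 0.
\]
For dipolar $\SLE(4)$ at $c = 1$ with $a = 1/\sqrt 2$, the boundary vertex $V$ is primary of conformal weight $h = a^2/2 = 1/4$, and the Kac $(1,2)$ null-vector equation $(L_{-2} - L_{-1}^2)V = 0$ holds; this is exactly the condition singled out by $\kappa = 4$.

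The main obstacle I anticipate is the derivation and justification of the correct form of Ward's equations in this mixed-boundary dipolar setting. Because the current field $\pa\Phi$ is purely imaginary on the Dirichlet arc but real on the Neumann arc, the Schwarz reflection / doubling arguments underlying the Ward identities in \cite{KM11,KM12} must be adapted to this sign-switching configuration. Moreover, the precise bookkeeping of the double poles of $T$ at $q_\pm$ and their cancellation against the primary transformation law of the insertion at the marked points is where the specific values $\kappa = 4$, $c = 1$, and $a = 1/\sqrt 2$ are rigidly pinned down, and is the technical heart of the proof.
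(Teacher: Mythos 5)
Your proposal follows essentially the same route as the paper: pull back under the Loewner flow, apply It\^o's formula so the drift becomes a Lie-derivative term plus $\frac{\kappa}{2}\pa_\xi^2$, and kill it via Ward's equations for the mixed-boundary dipolar CFT combined with the level-two degeneracy $(L_{-2}-L_{-1}^2)V_\star^{ia}=0$ (the paper's $T*V=\pa^2V$ with $L_{-1}V=\pa V$, $L_0V=hV$, $L_1V=0$), i.e.\ the BPZ--Cardy equation. The only cosmetic difference is that the paper transports everything to the $(\H,-1,1)$-uniformization, where the double poles of $T$ at $q_\pm$ show up as the explicit $L_{-1}$, $L_0$, $L_1$ and constant terms of Lemma~\ref{Ward} rather than as a cancellation at infinity in the strip, and the key reflection fact you flag is precisely that $-\frac12 J\odot J$ is real on the whole boundary.
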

Here, $\Phiplus$ is the chiral bosonic field, the ``holomorphic part" of $\Phi.$
See Subsection~\ref{Vplus} for its definition.
Also see \cite{BB03}, \cite{RBGW07}, \cite{KM11} for the chordal version and \cite{BB04}, \cite{Cardy04}, \cite{KM12} for the radial version of this theorem with general $\kappa>0.$

\medskip \section{\texorpdfstring{A dipolar CFT with mixed boundary condition and \textit{c}\,=\,1}{A dipolar CFT}}

Given a simply connected domain $(D,q_-,q_+)$ with two marked boundary points $q_-$ and $q_+,$ we consider the Gaussian free field $\Phi$ whose 2-point function is the Green's function with mixed boundary condition.
As Fock space fields, the non-chiral vertex fields $\VV^\alpha$ and the Virasoro field $T$ are expressed in terms of OPE exponentials of $\Phi$ and the OPE square of the current $J = \pa \Phi,$ respectively.
For dipolar Loewner vector fields $v_\zeta,$ we construct Ward's functionals $W(v_\zeta)$ such that the action of Lie derivative operators $\LL_{v_\zeta}$ on any string $X$ of fields in the Ward family $\FF(T)$ of the Virasoro field $T$ is equivalent to $W(v_\zeta)\,X$ within correlations.
In the last subsection we define the rooted vertex fields $V_\star^\alpha$ and study their basic properties including Ward's equation and the level two degeneracy equation.
We assume that the readers are familiar with definitions of some concepts developed in  \cite[Lectures~1--4]{KM11}.

\medskip \subsection{Gaussian free field} \label{ss: GFF}
We consider a simply connected domain $(D,q_-,q_+)$ with two marked boundary points $q_-=q$ and $q_+=r.$
The Gaussian free field $\Phi$ in $D$ with mixed boundary condition (the Dirichlet boundary condition on the positively oriented boundary arc $\arc{qr}$ and the Neumann boundary condition on the other boundary arc $\arc{rq}$) is a Fock space field such that its 1-point function is trivial, i.e., $\E[\Phi(z)]=0,$ and its 2-point correlation function is given by 
$$\E[\Phi(\zeta)\Phi(z)] = 2G(\zeta,z),$$
where $G$ is the Green's function for $D$ with mixed boundary condition.
(An alternative and more traditional notation for the correlation function is $\langle\cdot\rangle.$)
The boundary arcs $\arc{qr}$ and $\arc{rq}$ are called the Dirichlet boundary and the Neumann boundary, respectively.
In the first quadrant,
$$G(\zeta,z) = \log\Big|\frac{(\zeta-\bar z)(\zeta+z)}{(\zeta-z)(\zeta+\bar z)}\Big|$$
with the Dirichlet boundary condition on the positive real axis $\R_+$ and the Neumann boundary condition on $i\R_+.$
Since the Green's function is conformally invariant,
$$G(\zeta,z)=\log \Big|\tanh\frac{\zeta-\bar z}4\Big|-\log \Big|\tanh\frac{\zeta- z}4\Big|$$
in the strip $\S:=\{z = x+ iy\,|\, 0 < y < \pi\}$
with the Dirichlet boundary condition on the real axis $\R$ and the Neumann boundary condition on the horizontal line $\R + \pi i.$
The current $J :=\pa \Phi$ is also a Fock space field with
\begin{equation} \label{eq: EJPhi}
\E[J(\zeta)\Phi(z)] =
-\frac1{2\sinh\dfrac{\zeta-z}2}+\frac1{2\sinh\dfrac{\zeta-\bar z}2}\qquad\textrm{in }(\S,-\infty,\infty).
\end{equation}
This shows that the current $J$ is purely imaginary on the Dirichlet boundary $\R$ and it is real on the Neumann boundary $\R+\pi i.$
Thus $J \odot J$ is real on the whole boundary.
The current $J$ has the 2-point function
\begin{equation} \label{eq: EJJ}
\E[J(\zeta)J(z)] =
-\frac1{4\sinh\dfrac{\zeta-z}2\tanh\dfrac{\zeta-z}2}\qquad\textrm{in }(\S,-\infty,\infty).
\end{equation}

\medskip \subsection{Vertex fields and Virasoro field} \label{ss: V and T} \hfill

\subsubsec{OPE powers of Gaussian free field} 
Let $\Phi$ be the Gaussian free field in $(D,q_-,q_+)$ with mixed boundary condition described in the previous subsection.
As $\zeta\to z,\;(\zeta\ne z),$ the operator product expansion (OPE) of $\Phi$ and $\Phi$ itself is stated as 
$$\Phi(\zeta) \Phi(z) = \log \frac{1}{|\zeta-z|^2} + 2c(z) + \Phi^{\odot 2}(z) + o(1),$$
where the non-random function $c$ is given by 
$$c=\log\Big(\frac4{|w'|}\tan\frac{\Im\,w}2\Big)$$
and $w$ is a conformal map from $(D,q_-,q_+)$ onto $(\S,-\infty,\infty).$
We call $\Phi^{\odot2} + 2c$ the OPE square of $\Phi$ and write $\Phi^{*2}$ for it.
One can define the OPE powers $\Phi^{*n}$ and then express them in terms of Wick's powers $\Phi^{\odot n}$ of $\Phi:$
$$\Phi^{\odot n} = (2c)^{n/2}H_n^*\left(\frac{\Phi}{\sqrt{2c}}\right),$$
where $H_n(z) = \sum_{k=0}^n a_k z^k$ are the Hermite polynomials (the monic polynomials orthogonal with respect to the standard Gaussian measure $e^{-x^2/2}/\sqrt{2\pi}\,dx$ on $\R$) and
$H_n^*(\alpha\Phi) = \sum_{k=0}^n a_k \alpha^k\Phi^{*k}$
(cf. \cite[Proposition~3.2]{KM11}).

\subsubsec{Vertex fields} 
The (non-chiral) \emph{vertex fields} $\VV^\alpha\,(\alpha\in\C)$ are defined as OPE exponentials of $\Phi:$
$$\VV^\alpha = e^{*\alpha\Phi} = \sum_{n=0}^\infty \alpha^n \frac{\Phi^{*n}}{n!}.$$
In terms of Wick's exponentials of $\Phi,$ we express them as
$$e^{*\alpha\Phi}=C^{\alpha^2}\,e^{\odot\alpha\Phi},\qquad C = e^c =\frac4{|w'|}\tan\frac{\Im\,w}2$$
(cf. \cite[Proposition~3.3]{KM11}).
Thus they are $(-\alpha^2/2,-\alpha^2/2)$-differentials.

\subsubsec{Transformation laws} 
Recall some transformation laws for conformal Fock space fields. 
A conformal Fock space field $F$ is a $(\lambda,\lambda_*)$-\emph{differential} (a pair $(\lambda,\lambda_*)$ is called conformal dimensions or degrees) if for any two overlapping charts $\phi,\wt\phi,$ and for any correlation functional $\YY,$ we have
$$\E\,[\YY(F\,\|\,\phi)] = (h')^\lambda(\overline{h'})^{\lambda_*}\E\,[\YY(F\,\|\,\wt\phi)\circ h],$$
where $h=\wt\phi\circ\phi^{-1}:~ \phi(U\cap\wt U)\to \wt\phi(U\cap\wt U)$ is the transition map. 
For examples, $\Phi$ is a (0,0)-differential,
$J$ is a (1,0)-differential, and $J\odot J$ is a (2,0)-differential. 
On the other hand, \emph{pre-pre-Schwarzian forms}, \emph{pre-Schwarzian forms}, and \emph{Schwarzian forms} of order $\mu(\in\C)$ are fields with transformation laws 
$$\E\,[\YY(F\,\|\,\phi)]=\E\,[\YY(F\,\|\,\wt\phi)\circ h ]+\mu \log h',\, \E\,[\YY(F\,\|\,\phi)]=h'\,\E\,[\YY(F\,\|\,\wt\phi)\circ h] +\mu \frac{h''}{h'},$$ 
and $$\E\,[\YY(F\,\|\,\phi)]=(h')^2\,\E\,[\YY(F\,\|\,\wt\phi)\circ h] + \mu S_h,$$
respectively, where $S_h$ is Schwarzian derivative of $h,$ 
$$S_h = \Big( \frac{h''}{h'}\Big)' -\frac12\Big( \frac{h''}{h'}\Big)^2.$$

\subsubsec{Virasoro field} We define the \emph{Virasoro field} $T$ by 
$$T = -\frac12 J*J.$$
By Wick's calculus, it follows from \eqref{eq: EJJ} that
$$J(\zeta)J(z)=-\frac{1}{(\zeta-z)^2} + J\odot J(z) - \frac16 S_w(z) - \frac1{24} w'(z)^2+ o(1), \qquad(\zeta\to z),$$
where $w$ is a conformal map from $(D,q_-,q_+)$ onto $(\S,-\infty,\infty).$ 
Thus $T$ is expressed as
\begin{equation} \label{eq: TinS}
T = -\frac12 J\odot J + \frac1{12} S_w + \frac1{48} w'^2,
\end{equation}
and therefore $T$ is a Schwarzian form of order $1/12.$

\subsection{Lie derivatives and Ward family}
\subsubsec{Lie derivatives}
Recall that the Lie derivative $\LL_vX$ of $X$ against a vector field $v$ (see \cite[Section~4.4]{KM11}) is defined by 
$$(\LL_v X\,\|\, \phi) = \frac d{dt}\Big|_{t=0} (X\,\|\, \phi\circ\psi_{-t}),$$
where $\psi_t$ is a local flow of $v,$ and $\phi$ is an arbitrary chart. 
Recall the formula for the Lie derivatives of differentials and forms: 
\renewcommand{\theenumi}{\alph{enumi}}
{\setlength{\leftmargini}{1.7em}
\begin{enumerate}
\item $\LL_vX=\left(v\pa+\bar v\bar\pa+\lambda v'+\lambda_*\overline{v'}\right)X$ for a $(\lambda,\lambda_*)$-differential $X;$
\item $\LL_vX=\left(v\pa+v'\right)X +\mu v'$ for a pre-Schwarzian form $X$ of order $\mu;$ 
\item $\LL_vX=\left(v\pa+2v'\right)X +\mu v'''$ for a Schwarzian form $X$ of order $\mu.$
\end{enumerate}}
\noindent Denote
$$\LL_v^+ = \frac{\LL_v -i\LL_{iv}}2,\ \LL_v^- = \frac{\LL_v +i\LL_{iv}}2$$
so that $\LL_v = \LL_v^+ +\LL_v^-.$

\subsubsec{Ward family}
We denote by $\FF(T)$ the linear space of all Fock space fields $X$ such that for each non-random local vector field $v,$
$$\LL_vX= T^+_vX+ T^-_vX$$
holds in the maximal open set $D_\hol (v)$ where $v$ is holomorphic.
We call $\FF(T)$ the Ward family of $T.$
Here, $T^\pm_v$ are residue operators defined by
$$
(T_v^+X)(z)= \frac1{2\pi i}\oint_{(z)} vT\,X(z), \qquad
(T_v^-X)(z)= -\frac1{2\pi i}\oint_{(z)} \bar v\bar T\,X(z)
$$
in a given chart $\phi$ such that $\phi(p)=z.$ 
Recall that $X\in\FF(T)$ if and only if
Ward's OPE holds for $X$ in every local chart $\phi.$ That is to say, 
$$\Sing_{\zeta\to z}(T(\zeta)X(z))= (\LL_{k_\zeta}^+X)(z), \quad \Sing_{\zeta\to z}(T(\zeta)\bar X(z))= (\LL_{k_\zeta}^+\bar X)(z),$$
where $\Sing_{\zeta\to z}(T(\zeta)X(z))$ is the singular part of the operator product expansion $T(\zeta)X(z)$ as $\zeta\to z$ and $k_\zeta$ is the (local) vector field defined by $(k_\zeta\,\|\,\phi)(\eta) = 1/(\zeta-\eta)$ for a given chart $\phi:U\to\phi U.$
See \cite[Proposition~5.3]{KM11}.

For example, $\Phi$ and the fields in its OPE family $\FF$ belong to $\FF(T).$
(See the next proposition and \cite[Proposition~5.8]{KM11}.)
In particular, $T$ itself is in $\FF(T).$

\begin{prop} The Gaussian free field $\Phi$ is in the OPE family $\FF$ of $\Phi.$
\end{prop}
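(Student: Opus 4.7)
The plan is to observe that the proposition is, at bottom, a bookkeeping statement: the field $\Phi$ is literally one of the generators of $\FF$, corresponding to $j=k=0$ in the list $\pa^j\bp^k\Phi$. So inclusion holds by the definition of $\FF$ given in Subsection~2.2. What remains is only to verify that this formal inclusion is consistent in the present mixed-boundary setting, i.e.\ that $\Phi$ is a legitimate (formal) Fock space field with the correlation structure required for $\FF$ to be a well-defined algebra under OPE multiplication.

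First I would return to Subsection~3.1 and note that $\Phi$ is defined by the Wick prescription
\[
\E[\Phi(z)]=0,\qquad \E[\Phi(\zeta)\Phi(z)]=2G(\zeta,z),
\]
with $G$ the Green's function for the mixed boundary condition. This uniquely determines all correlations of $\Phi$ with any polynomial in the Wick powers $\Phi^{\odot n}$ and their derivatives through Wick's calculus, so $\Phi$ is a bona fide (logarithmic) Fock space field in the KM11 sense.

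Next, to make the membership slightly less tautological, I would give an alternative realization of $\Phi$ purely through the other generators: differentiating the vertex field $\VV^\alpha=e^{*\alpha\Phi}=\sum_{n=0}^{\infty}\alpha^n \Phi^{*n}/n!$ from Subsection~2.2 in $\alpha$ at zero yields
\[
\Phi \;=\; \frac{d}{d\alpha}\Big|_{\alpha=0}\VV^{\alpha}.
\]
Thus even if one were to generate $\FF$ using only the exponentials $\pa^j\bp^k e^{*\alpha\Phi}$, the field $\Phi$ would still lie in the resulting algebra.

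Finally, I would note that the OPE multiplication is local: its structure depends only on the short-distance expansion of the $2$-point function, and $G(\zeta,z)=\log(1/|\zeta-z|^2)+\mathrm{(smooth)}$ in every boundary condition, including the mixed one treated here. Consequently the algebraic arguments from \cite[Section~3]{KM11} transport verbatim and guarantee that $\FF$ is closed under OPE multiplication and contains $\Phi$. There is no real obstacle: the only thing one might worry about is whether the non-standard Neumann component of the boundary breaks the validity of Wick's calculus, but it does not, since only the diagonal singularity of $G$ is used.
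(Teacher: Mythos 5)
Your argument establishes only the literal reading of the statement, which is indeed a tautology: $\Phi$ is one of the declared generators of $\FF$, so of course it lies in $\FF$. But that is not what the paper's proof of this proposition does, and the sentence immediately preceding it shows what is actually intended: the proposition is cited to justify the claim that ``$\Phi$ and the fields in its OPE family $\FF$ belong to $\FF(T)$.'' The real content is membership of $\Phi$ in the \emph{Ward family} $\FF(T)$ of the Virasoro field $T$, which is not definitional: one must verify Ward's OPE for $\Phi$ in the mixed-boundary setting, i.e.\ that $\Sing_{\zeta\to z}\big(T(\zeta)\Phi(z)\big)=(\LL^+_{k_\zeta}\Phi)(z)$ in every local chart.

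The missing step is short but is the whole point. With $A=-\frac12 J\odot J$, Wick's calculus together with \eqref{eq: EJPhi} gives
$$A(\zeta)\Phi(z)=-\E[J(\zeta)\Phi(z)]\,J(\zeta)\sim\frac{\pa\Phi(z)}{\zeta-z},\qquad(\zeta\to z),$$
and since by \eqref{eq: TinS} the field $T$ differs from $A$ only by non-random terms, $T(\zeta)\Phi(z)$ has the same singular part. As $\Phi$ is a $(0,0)$-differential, $\LL^+_{k_\zeta}\Phi=k_\zeta\,\pa\Phi=\pa\Phi(z)/(\zeta-z)$ in the given chart, so Ward's OPE holds and $\Phi\in\FF(T)$. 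None of this appears in your proposal. Your closing remark --- that only the diagonal singularity of $G$ matters and the Neumann component is harmless --- is the one place where the mixed boundary condition genuinely enters, through the explicit form of $\E[J(\zeta)\Phi(z)]$ in \eqref{eq: EJPhi}, and that is exactly the computation that has to be carried out rather than waved away.
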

\begin{proof}
Denote $A= -\frac12 J\odot J.$
By Wick's calculus and \eqref{eq: EJPhi},
$$A(\zeta)\Phi(z) = -\E[J(\zeta)\Phi(z)]J(\zeta) \sim \frac{\pa\Phi(z)}{\zeta-z}$$
as $\zeta\to z.$
(We use the notation $\sim$ for the singular part of operator product expansion.)
Proposition now follows from \eqref{eq: TinS} and the fact that 
$$\LL_v^+X = (v\pa + \lambda v')X$$
for a $(\lambda,\lambda_*)$-differential $X.$
\end{proof}

\medskip \subsection{Ward's functionals and Ward's equations} \label{ss: Wv}
In this subsection we use Ward's functionals for dipolar Loewner vector fields to derive Ward's equations for the tensor product of fields in the OPE family $\FF$ of $\Phi.$
Combining Ward's equations with the level two degeneracy equation for the rooted vertex field, we will prove the main theorem in the next section.

It is convenient to use the $(\H,-1,1)$-uniformization.
For example,
\begin{equation} \label{eq: TinH}
T =A+ \frac1{12} S_w + \frac1{4} \frac{w'^2}{(1-w^2)^2},\qquad A:= -\frac12 J\odot J,
\end{equation}
where $w$ is a conformal map from $(D,q_-,q_+)$ onto $(\H,-1,1).$
It is easy to see that a quadratic differential $A$ has a simple pole at $q_\pm$ and the Virasoro field $T$ has a double pole at $q_\pm.$
Indeed, by \eqref{eq: EJPhi}, 
$$
\E\,[A(\zeta)\Phi(z_1)\odot \Phi(z_2)]=
\frac1\zeta \Big(-\frac{\sqrt{z_1^{\phantom{*}}}}{\zeta-z_1} + \frac{\sqrt{\bar z_1^{\phantom{*}}}}{\zeta-\bar z_1}\Big)\Big(-\frac{\sqrt{z_2^{\phantom{*}}}}{\zeta-z_2} + \frac{\sqrt{\bar z_2^{\phantom{*}}}}{\zeta-\bar z_2}\Big)
$$
in $\H$ with $q_-=0$ and $q_+=\infty.$

\subsubsec{Ward's functionals} 
We consider the following dipolar Loewner vector field in $D=\H:$
$$v_\zeta(z) = \frac{1-z^2}2\frac{1-\zeta z}{\zeta-z}, \qquad(\zeta\in\overline\H).$$
This meromorphic vector field has a simple zero at $q_\pm=\pm1.$
For $\zeta\in\H,$ Ward's functional $W^+(v_\zeta)$ is defined by
$$W^+(v_\zeta) = \lim_{\ve\to0} W^+(v_\zeta;U_\ve),$$
where $U_\ve = D\sm (B(q_-,\ve) \cup B(q_+,\ve)\cup B(\zeta,\ve))$ and
$$W^+(v_\zeta;U_\ve)=\frac1{2\pi i}\int_{\pa U_\ve} v_\zeta A-\frac1{\pi }\int_{U_\ve} (\bp v_\zeta)A,\qquad A:= -\frac12 J\odot J.$$
Then we have
\begin{equation} \label{eq: Wplus}
W^+(v_\zeta)=\frac1{2\pi i}\int_{\pa D} v_\zeta A-\frac1{\pi }\int_{D} (\bp v_\zeta)A,\qquad A:= -\frac12 J\odot J,
\end{equation}
where $\bp v$ is interpreted as a distribution.
As in the chordal case, for fields $X_j\in\FF$ and $z_j\in D\sm\{\zeta\},$ Ward's identity
\begin{equation} \label{eq: Ward}
\E\,[W^+(v_\zeta)\, X_1(z_1)\cdots X_n(z_n)] = \E\,[\LL^+_{v_\zeta}\,(X_1(z_1)\cdots X_n(z_n))]
\end{equation}
holds.

\subsubsec{Ward's equations} 
The following Lemma is the version of Ward's equation that we will use in the proof of Theorem~\ref{main}.
To state it, let us recall the modes $L_n$ of the Virasoro field $T:$
$$L_n(z):=\frac1{2\pi i}\oint_{(z)}(\zeta-z)^{n+1} \,T(\zeta)~d\zeta.$$
The operators $(L_nX)(z) = L_n(z)X(z)$ (in any given chart) are simply OPE multiplications,
\begin{equation} \label{eq: LT}
L_nX = T*_{(-n-2)}X.
\end{equation}
Denote $W^-(v) = \overline{W^+(v)}.$

\begin{lem} \label{Ward4Y}
Let $Y, X_1,\cdots, X_n\in \FF$ and let $X$ be the tensor product of $X_j$'s.
Then
\begin{align*}
\E\,[&Y(z)\LL_{v_z}^+X] + \E\,[\LL_{v_{\bar z}}^-(Y(z)X)]\\
&=\frac{(1-z^2)^2}{2}\,\E[(L_{-2}Y)(z)X]
-\frac{3z(1-z^2)}2\,\E[(L_{-1}Y)(z)X] \\
& +\frac{3z^2-1}2\,\E[(L_0Y)(z)X]+ \frac{z}2\,\E[(L_1Y)(z)X] - \frac1{8}\,\E[Y(z)X],
\end{align*}
where all fields are evaluated in the identity chart of $\H.$
\end{lem}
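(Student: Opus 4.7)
I plan to apply Ward's identity \eqref{eq: Ward} to the dipolar vector field $v_\zeta$ and extract the limit $\zeta \to z$ via careful residue bookkeeping, using $Y\in\FF\subset\FF(T)$.

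First I compute the (terminating) Laurent expansion
\[
v_z(\eta) = -\frac{(1-z^2)^2/2}{\eta-z} + \frac{3z(1-z^2)}{2} - \frac{3z^2-1}{2}(\eta-z) - \frac{z}{2}(\eta-z)^2.
\]
Comparing with \eqref{eq: LT}, the $(\eta-z)^{k+1}$-coefficients for $k=-2,-1,0,1$ are (up to a sign) the coefficients of $L_k Y$ appearing on the right-hand side of the lemma. Thus the right-hand side equals $-\E[T^+_{v_z} Y(z)\,X] - \tfrac{1}{8}\E[Y(z)\,X]$, where $T^+_{v_z}$ denotes the Ward residue operator.

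Next I invoke Ward's identity. For $\zeta\in\H$ with $\zeta\ne z, z_j$, the Leibniz rule gives
\[
\E[\LL^+_{v_\zeta} Y(z)\cdot X] + \E[Y(z)\LL^+_{v_\zeta} X] = \E[W^+(v_\zeta)\, Y(z)\, X].
\]
Passing to $\zeta\to z$: the first term on the left becomes $T^+_{v_z} Y\cdot X$ (since $Y\in\FF(T)$), up to a divergent piece from the pole of $v_\zeta$ at $\zeta\to z$; this divergence is cancelled by the matching divergence on the right-hand side coming from the volume term $-\tfrac{1}{\pi}\int_D (\bp v_\zeta) A$ in \eqref{eq: Wplus}, as $\bp v_\zeta$ concentrates into $-\tfrac{\pi(1-z^2)^2}{2}\delta_z$. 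The finite remainder on the right is $\tfrac{(1-z^2)^2}{2}\,(A*_0 Y)(z)$, and using \eqref{eq: TinH} together with $L_{-2}Y = T*_0 Y$ one obtains $\tfrac{(1-z^2)^2}{2}L_{-2}Y - \tfrac{1}{8}Y$, which accounts for both the leading $L_{-2}$ term and the $-\tfrac{1}{8}$ anomaly. The analogous computation for $W^-(v_{\bar z})$ on the antiholomorphic side produces $\E[\LL^-_{v_{\bar z}}(YX)]$ directly, since $v_{\bar z}$ is holomorphic in $\H$ (its pole lies at $\bar z\in\bar\H\setminus\H$) and no regularization is needed. The residues at $\pm 1$ that appear when the boundary integrals in $W^\pm$ are deformed into the interior cancel against one another because $A$ is real on the entire boundary $\pa\H$, via Schwarz reflection.

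The main obstacle is the coincidence of the pole of $v_z$ with the insertion point $z$ of $Y$: the naive Leibniz splitting $\LL^+_{v_z}(YX) = (\LL^+_{v_z}Y)X + Y(\LL^+_{v_z}X)$ is ill-defined, so one must pass through Ward's functional to extract finite parts. Obtaining the coefficient $-\tfrac{1}{8}$ correctly requires also tracking the Schwarzian-form summand $\tfrac{1}{4(1-\eta^2)^2}$ of $T$ coming from the dipolar $\H$-uniformization in \eqref{eq: TinH}; this is the only place where the geometry of the marked boundary points $\pm 1$ enters the constant.
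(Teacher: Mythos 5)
Your overall route is the paper's own: apply Ward's identity \eqref{eq: Ward} to the string $Y(z)X$ with the dipolar field $v_\zeta$, use the point mass of $\bp v_\zeta$ together with the realness of $A=-\tfrac12 J\odot J$ on the whole boundary to trade the volume term of \eqref{eq: Wplus} for $\tfrac{(1-\zeta^2)^2}2 A(\zeta)$ and the boundary term for $-W^-(v_{\bar\zeta})$ (this is \eqref{eq: A=}), and then let $\zeta\to z$ matching singular parts; your Laurent expansion of $v_z$ and your identification of the right-hand side of the lemma with $-\E[(T^+_{v_z}Y)(z)X]-\tfrac18\E[Y(z)X]$ are both correct. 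The genuine problem is your finite-part bookkeeping in the limit, which as written is inconsistent and would double the $L_{-2}$ coefficient. The finite part of $\E[(\LL^+_{v_\zeta}Y)(z)X]$ as $\zeta\to z$ is \emph{not} $\E[(T^+_{v_z}Y)(z)X]$: by \eqref{eq: sing OPE}, $\LL^+_{v_\zeta}Y(z)$ equals $\tfrac{(1-\zeta^2)^2}2\Sing_{\zeta\to z}(A(\zeta)Y(z))$ plus the subleading Taylor coefficients of $v_\zeta$ at $z$ paired with $A*_{-1}Y,\,A*_{-2}Y,\,A*_{-3}Y$; so after the divergence cancels against the volume term, the Lie-derivative term leaves only the $L_{-1},L_0,L_1$ combination, whereas $T^+_{v_z}Y$ contains in addition the term $-\tfrac{(1-z^2)^2}2\,T*_0Y$. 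Keeping simultaneously ``$T^+_{v_z}Y$'' from the left and the ``finite remainder $\tfrac{(1-z^2)^2}2(A*_0Y)(z)$'' from the right counts the $T*_0Y=L_{-2}Y$ contribution twice and yields $(1-z^2)^2\,\E[(L_{-2}Y)(z)X]$ in place of $\tfrac{(1-z^2)^2}2\,\E[(L_{-2}Y)(z)X]$ (the $-\tfrac18$ happens to survive). The repair is within your own computation: take the finite part of the Lie term to be $\tfrac{3z(1-z^2)}2A*_{-1}Y+\tfrac{1-3z^2}2A*_{-2}Y-\tfrac z2A*_{-3}Y$ (the constant, linear and quadratic coefficients of your expansion of $v_z$), or, if you insist on producing $T^+_{v_z}Y$, subtract along with the singular part also the constant term of the OPE; then \eqref{eq: TinH} and \eqref{eq: LT} give the $L_{-2}$ term and the $-\tfrac18$ exactly once.

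A secondary correction: your explanation of the boundary step is not the actual mechanism. No residues at $\pm1$ occur, because $v_\zeta$ has simple zeros at $q_\pm=\pm1$ which neutralize the simple poles of $A$ there (this is what makes $W^+(v_\zeta)$ well defined); what is used is \eqref{eq: Wvbar}: since $A$ is real on all of $\R$ (the current is purely imaginary on the Dirichlet arc and real on the Neumann arc) and $v_{\bar\zeta}=v_\zeta^{\#}$, the boundary integral $\tfrac1{2\pi i}\int_\R v_\zeta A$ equals $-W^-(v_{\bar\zeta})$, whose insertion gives $\E[\LL^-_{v_{\bar\zeta}}(Y(z)X)]$ after conjugating Ward's identity. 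With these two points fixed, your argument reproduces the paper's proof.
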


\begin{proof}
For $\zeta\in\H,$ the vector field $v_\zeta$ is meromorphic in $\wh\C$ without poles on $\R\cup\{\infty\}$ and its
reflected vector field
$$v_\zeta^\#(z)= \overline{v_{\zeta}(\bar z)} = v_{\bar\zeta}(z)$$
is holomorphic in $\H.$
By the definition of Ward's functional,
\begin{align} \label{eq: Wvzeta}
W^+(v_\zeta)&=\frac 1{2\pi i}\int_{-\infty}^\infty v_\zeta A-\frac1\pi\int_\H A~\bp v_\zeta,\\
W^+(v_{\bar\zeta})&=\frac1{2\pi i}\int_{-\infty}^\infty v_{\bar\zeta}A \nonumber.
\end{align}
Recall that $J$ is purely imaginary on the Dirichlet boundary and real on the Neumann boundary.
Thus $A$ is real on the whole boundary.
It follows that
\begin{equation} \label{eq: Wvbar}
\frac 1{2\pi i}\int_{-\infty}^\infty v_\zeta A=- W^-(v_{\bar\zeta}).
\end{equation}
By \eqref{eq: Wvzeta} and \eqref{eq: Wvbar},
$$\frac1\pi\int_\H A~\bp v_\zeta=-W^+(v_\zeta)-W^-(v_{\bar\zeta}).$$
Since $\bar\partial v_\zeta=-\pi/2 \,(1-\zeta^2)^2\delta_\zeta,$ we represent $A$ as Ward's functionals in the following way:
\begin{equation} \label{eq: A=}
 A(\zeta)=\frac2{(1-\zeta^2)^2}\,\Big(W^+(v_\zeta)+ W^-(v_{\bar\zeta})\Big).
\end{equation}

Next we express $\LL_{v_\zeta}^+Y(z)$ in terms of the OPE coefficients $ A*_jY(z)$ and the singular part of operator product expansion
$$A(\zeta)Y(z)\sim \sum_{j\le-1}{(A*_jY)(z)}{(\zeta-z)^j}, \qquad (\zeta\to z).$$
(We use the notation $\sim$ for the singular part of operator product expansion.)
Indeed, we have the following expression for $\LL_{v_\zeta}^+Y(z):$
\begin{align} \label{eq: sing OPE}
\LL_{v_\zeta}^+Y(z) &= \frac{(1-\zeta^2)^2}{2} \Sing_{\zeta\to z} (A(\zeta)Y(z)) \\
&+\frac{2\zeta+z-\zeta^3-\zeta^2z-\zeta z^2}2 \, A*_{-1}Y(z) \nonumber\\ &+\frac{1-\zeta^2-2\zeta z}2 A*_{-2}Y(z) -\frac\zeta2 \, A*_{-3}Y(z)\nonumber,
\end{align}
where $\Sing_{\zeta\to z} (A(\zeta)Y(z)) = \sum_{j\le-1}{(A*_jY)(z)}{(\zeta-z)^j}.$
Equation \eqref{eq: sing OPE} can be shown by the identity
\begin{align*}
\LL_{v_\zeta}^+Y(z) &= \frac1{2\pi i}\oint_{(z)}v_\zeta(\eta)A(\eta)Y(z)\,d\eta \\
&= \sum_{j\le-1}{A*_jY(z)} \frac1{2\pi i}\oint_{(z)}(\eta-z)^j\,v_\zeta(\eta)\,d\eta,
\end{align*}
and the fact that the integral 
$$\frac1{2\pi i}\oint_{(z)}(\eta-z)^j\,v_\zeta(\eta)\,d\eta
$$
is evaluated as 
\begin{align*}
&\frac{(1-\zeta^2)^2}2(\zeta-z)^j+\frac{2\zeta+z-\zeta^3-\zeta^2z-\zeta z^2}2 &\,\textrm{if } j=-1;\\
&\frac{(1-\zeta^2)^2}2(\zeta-z)^j+\frac{1-\zeta^2-2\zeta z}2 &\,\textrm{if } j=-2;\\
&\frac{(1-\zeta^2)^2}2(\zeta-z)^j-\frac\zeta2 &\,\textrm{if } j=-3;\\
&\frac{(1-\zeta^2)^2}2(\zeta-z)^j &\,\textrm{if } j\le-4.
\end{align*}
Subtracting the singular part of OPE and using \eqref{eq: sing OPE}, we have 
\begin{align*}
\E\,[(A\ast Y)(z)\,X]&=\lim_{\zeta\to z}\Big(\E\,[A(\zeta)Y(z)\,X]-\frac2{(1-\zeta^2)^2}\E\,[(\LL^+_{v_\zeta}Y)(z)\,X]\Big)\\
&+\frac2{(1-z^2)^2}\Big(\frac{3z(1-z^2)}2\,\E[(A*_{-1}Y)(z)X] \\
& +\frac{1-3z^2}2\,\E[(A*_{-2}Y)(z)X]- \frac{z}2\,\E[(A*_{-3}Y)(z)X]\Big).
\end{align*}
Let us now compute 
$$\lim_{\zeta\to z}\Big(\E\,[A(\zeta)Y(z)\,X]-\frac2{(1-\zeta^2)^2}\E\,[(\LL^+_{v_\zeta}Y)(z)\,X]\Big) .$$
It follows from \eqref{eq: A=}, Ward's identity~\eqref{eq: Ward}, and Leibniz's rule for Lie derivatives that
\begin{align*}
\E\,[A(\zeta) &Y(z)\,X]=\frac2{(1-\zeta^2)^2}\big(\E\,[W^+(v_\zeta) \,Y(z)\,X]+\E\,[W^-(v_{\bar\zeta}) \,Y(z)\,X]\big)\\
&=\frac2{(1-\zeta^2)^2}\big(\E\, [\LL^+_{v_\zeta}(Y(z)X)]+ \E\,[\LL^-_{v_{\bar\zeta}} (Y(z)X)]\big)\\
&=\frac2{(1-\zeta^2)^2}\big(\E\,[ (\LL^+_{v_\zeta} Y)(z)X]+\E\,[Y(z) \LL^+_{v_\zeta}X]+ \E\,[\LL^-_{v_{\bar\zeta}} (Y(z)X)]\big).
\end{align*}
Thus we have
\begin{align*}
\E\,&[Y(z)\LL_{v_z}^+X] + \E\,[\LL_{v_{\bar z}}^-(Y(z)X)]\\
&=\frac{(1-z^2)^2}{2}\,\E[(A*Y)(z)X]
-\frac{3z(1-z^2)}2\,\E[(A*_{-1}Y)(z)X] \\
& +\frac{3z^2-1}2\,\E[(A*_{-2}Y)(z)X]+ \frac{z}2\,\E[(A*_{-3}Y)(z)X].
\end{align*}
Lemma now follows from \eqref{eq: TinH} and \eqref{eq: LT}.
\end{proof}

\medskip \subsection{Bi-vertex fields and rooted vertex fields} \label{Vplus}
The Fock space field $e^{\odot ia\Phiplus(p,q_-)}$ in the main theorem (Theorem~\ref{main}) acts on the Fock space functionals/fields as a boundary condition changing operator.
This field does not belong to the Ward family $\FF(T).$
However, it can be represented as $V(p,q_-)/\E\,V(p,q_-)$ for some bi-vertex field $V$ (rooted at $q_-$) in $\FF(T).$

\subsubsec{Chiral bosonic field} 
We define a \emph{bi-variant} field $\Phiplus$ by
$$\Phiplus(z,z_0)=\big\{\Phiplus(\gamma)=\int_\gamma J(\zeta)\,d\zeta\big\},$$
where $\gamma$ is a curve from $z_0$ to $z.$ 
Then the values of $\Phiplus$ are multivalued functionals.
Integrating the operator product expansion 
$$T(\zeta)J(\eta)\sim\frac{J(\zeta)}{(\zeta-\eta)^2},$$
with respect to $\eta,$
we obtain Ward's OPE for $\Phiplus$ in both variables:
$$T(\zeta)\Phiplus(z, z_0)\sim \frac{J(z)}{\zeta-z}, \quad (\zeta\to z),\qquad T(\zeta)\Phiplus(z, z_0)\sim -\frac{J(z_0)}{\zeta-z_0}, \quad (\zeta\to z_0).$$
Thus it is natural to add $\Phiplus$ to $\FF.$

 By the definition of $\Phiplus,$
\begin{equation} \label{eq: G^+}
\E[\Phiplus(z, z_0)\Phi(z_1)]=2(\Gchiral(z,z_1)-\Gchiral(z_0,z_1)),
\end{equation}
where $\Gchiral$ is the \emph{complex} Green's function,
$$2\Gchiral(z,z_1)=G(z,z_1)+i\wt G(z,z_1).$$
(A multivalued function $\wt G$ is the harmonic conjugate of the Green's function.)
In terms of a conformal map $w:(D,q_-,q_+)\to (\S,-\infty,\infty),$ we have
$$\Gchiral(z,z_1)=\frac12\log \Big(\tanh\frac{w(z)-\overline{w(z_1)}}4\Big)-\frac12\log \Big(\tanh\frac{w(z)- w(z_1)}4\Big).$$
Differentiating \eqref{eq: G^+} with respect to $z_1$ we obtain
$$\E[\Phiplus(z, z_0)J(z_1)]=\frac12\frac{w'(z_1)}{\sinh((w(z)-w(z_1))/2)}-\frac12\frac{w'(z_1)}{\sinh((w(z_0)-w(z_1))/2)}.$$
It is a single-valued function of all three variables.
Correlations of two or more multivalued fields are defined only for \emph{non-intersecting paths}.
For instance, for non-intersecting paths $\gamma$ connecting from $z$ to $z_0$ and $\gamma'$ connecting from $z'$ to $z_0',$
$$\E[\Phiplus(\gamma)\Phiplus(\gamma')]=\log\frac{\tanh((w(z)-w(z'_0))/4)\tanh((w(z_0)-w(z'))/4)}{\tanh((w(z)-w(z'))/4)\tanh((w(z_0)-w(z'_0))/4)},$$
where the logarithm depends on the number of times $\gamma$ winds around $\gamma'.$

\subsubsec{Chiral bi-vertex fields} 
Similarly in the chordal case (see \cite[Section~12.2]{KM11}), we define the chiral vertex fields $V^\alpha(z,z_0), (z\ne z_0, \alpha\in\C)$ as bi-variant fields:
$$V^\alpha(z,z_0) = \Big(\frac{w'(z)w'(z_0)}{\tanh^2\dfrac{w(z)-w(z_0)}4}\Big)^{-\alpha^2/2}e^{\odot\alpha\Phiplus(z,z_0)},$$
where $w:(D,q_-,q_+)\to(\S,-\infty,\infty)$ is a conformal map. 
They are holomorphic differentials of conformal dimension 
$$\lambda=\lambda_0 = -\frac{\alpha^2}2$$
with respect to both variables $z$ and $z_0;$ they are $\Aut(D,q_-,q_+)$-invariant. 
They can be interpreted as the OPE exponential of bi-variant chiral bosonic field $\Phiplus(z,z_0)$ even though there is a technical difficulty in defining operator product expansions of multivalued fields.
Thus one can expect that Ward's OPE holds for $V^\alpha.$

\begin{prop}
We have
\begin{equation} \label{eq: OPE4V}
T(\zeta)V^\alpha(z,z_0) \sim -\frac{\alpha^2}2 \frac{V^\alpha(z,z_0)}{(\zeta-z)^2} + \frac{\pa_zV^\alpha(z,z_0)}{\zeta-z},\qquad (\zeta\to z).
\end{equation}
Similar operator product expansion holds as $\zeta\to z_0.$
\end{prop}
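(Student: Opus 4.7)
The plan is to compute the singular part of $T(\zeta)V^\alpha(z,z_0)$ as $\zeta\to z$ by direct Wick calculus, following the strategy used for the chordal vertex fields in \cite[Section~12.2]{KM11}. First, I would use \eqref{eq: TinH} to write $T = A + h$, where $A = -\tfrac12 J\odot J$ is the only stochastic piece and $h$ is a non-random holomorphic function, regular at the interior point $z$; this reduces the problem to the singular part of $A(\zeta)V^\alpha(z,z_0)$. Next, I would split
$V^\alpha(z,z_0) = C(z,z_0)\,e^{\odot \alpha \Phi^+(z,z_0)}$ with the deterministic prefactor $C(z,z_0) = \bigl(w'(z)w'(z_0)/\tanh^2\frac{w(z)-w(z_0)}{4}\bigr)^{-\alpha^2/2}$, and introduce the basic contraction
\begin{equation*}
k(\zeta;z,z_0) := \E[J(\zeta)\Phi^+(z,z_0)] = \int_{z_0}^{z}\E[J(\zeta)J(\eta)]\,d\eta.
\end{equation*}
Using \eqref{eq: EJJ}, the integral is evaluated explicitly and has the expansion $k(\zeta;z,z_0) = -(\zeta-z)^{-1} + k_0(z,z_0) + O(\zeta-z)$ as $\zeta\to z$, with an explicit regular value $k_0(z,z_0)$.

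Applying Wick's theorem to $J\odot J(\zeta)\cdot e^{\odot\alpha\Phi^+(z,z_0)}$ (contracting zero, one, or both $J$-factors against the $\Phi^+$'s in the Wick exponential) and keeping only the singular terms in $\zeta-z$ yields
\begin{equation*}
A(\zeta)V^\alpha(z,z_0) \sim -\frac{\alpha^2/2}{(\zeta-z)^2}V^\alpha(z,z_0) + \frac{\alpha^2 k_0(z,z_0)V^\alpha(z,z_0) + \alpha C(z,z_0)J(z)\odot e^{\odot\alpha\Phi^+(z,z_0)}}{\zeta-z},
\end{equation*}
the double pole arising from the double-contraction coefficient $-\tfrac{\alpha^2}{2}k(\zeta;z,z_0)^2$, and the simple pole combining the subleading part of $k^2$ with the leading part of the single-contraction term $-\alpha k(\zeta;z,z_0)\,J(\zeta)\odot e^{\odot\alpha\Phi^+(z,z_0)}$. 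From $\partial_z\Phi^+(z,z_0) = J(z)$ and Leibniz's rule,
\begin{equation*}
\partial_z V^\alpha(z,z_0) = \bigl(\partial_z\log C(z,z_0)\bigr) V^\alpha(z,z_0) + \alpha\,C(z,z_0)\,J(z)\odot e^{\odot\alpha\Phi^+(z,z_0)},
\end{equation*}
so \eqref{eq: OPE4V} is equivalent to the consistency identity $\partial_z\log C(z,z_0) = \alpha^2 k_0(z,z_0)$.

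The main step to verify is this identity. Direct differentiation of the formula for $C$ gives
\begin{equation*}
\partial_z\log C(z,z_0) = -\tfrac{\alpha^2}{2}\frac{w''(z)}{w'(z)} + \frac{\alpha^2\,w'(z)}{2\sinh((w(z)-w(z_0))/2)},
\end{equation*}
while $k_0(z,z_0)$ is obtained by subtracting the local $-(\zeta-z)^{-1}$ pole from the integral defining $k$, using \eqref{eq: EJJ} written in $w$-coordinates together with the conformal covariance of $J$ as a $(1,0)$-differential (the $w''/w'$ piece arising from changing back to the $D$-chart). The matching confirms that the prefactor in $V^\alpha$ is exactly what makes $V^\alpha$ a holomorphic $(-\alpha^2/2)$-differential obeying Ward's OPE. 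The OPE as $\zeta\to z_0$ then follows from the identity $V^\alpha(z,z_0) = V^{-\alpha}(z_0,z)$ (a consequence of $\Phi^+(z,z_0) = -\Phi^+(z_0,z)$ and the $\alpha\leftrightarrow -\alpha$ symmetry of $C$) applied to the same computation with the roles of $z$ and $z_0$ exchanged.
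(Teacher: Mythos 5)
Your proposal is correct and follows essentially the same route as the paper: Wick's calculus applied to $A(\zeta)=-\tfrac12 J\odot J(\zeta)$ against the Wick exponential, with the double pole coming from the double contraction $-\tfrac{\alpha^2}2\E[J(\zeta)\Phiplus(z,z_0)]^2$ and the simple pole matched against $\pa_z V^\alpha$ via the identity $\pa_z\log C=\alpha^2 k_0$. The only cosmetic difference is that the paper first uses the fact that $V^\alpha$ is a differential to reduce to the $(\S,-\infty,\infty)$-uniformization (so the $w''/w'$ terms you track never appear), whereas you verify the chart-covariant identity directly; your symmetry argument $V^\alpha(z,z_0)=V^{-\alpha}(z_0,z)$ for the $\zeta\to z_0$ expansion is a fine way to handle the case the paper leaves implicit.
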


\begin{proof}
Since chiral vertex fields are differentials, it is sufficient to check Ward's OPE in the $(\S,-\infty,\infty)$-uniformization.
By Wick's calculus,
\begin{align*}
T(\zeta)V^\alpha(z,z_0)&= -\frac {\E\,[V^\alpha(z,z_0)]}2\sum_{n\ge0}\frac{\alpha^n}{n!}~(J(\zeta) \odot J(\zeta)) \, (\Phiplus(z,z_0))^{\odot n} \\
&\sim~-\alpha\, \E[J(\zeta)\Phiplus(z,z_0)] \,J(\zeta)\odot V^\alpha(z,z_0) \\
&-\frac12\alpha^2\, (\E[J(\zeta)\Phiplus(z,z_0)])^2\, V^\alpha(z,z_0).
\end{align*}
We need to compute the singular parts of correlation functions $\E[J(\zeta)\Phiplus(z,z_0)]$ and $(\E[J(\zeta)\Phiplus(z,z_0)])^2$ as $\zeta\to z.$
We have
$$\E[J(\zeta)\Phiplus(z,z_0)]=-\frac1{2\sinh\dfrac{\zeta-z}2}+\frac1{2\sinh\dfrac{\zeta-z_0}2} \sim -\frac1{\zeta-z}$$
and
$$(\E[J(\zeta)\Phiplus(z,z_0)])^2\sim \frac1{(\zeta-z)^2} -\frac1{\sinh\dfrac{z-z_0}2}\frac1{\zeta-z}$$
as $\zeta\to z.$
On the other hand, by direct computation,
$$\pa_zV^\alpha(z,z_0) = \alpha J(z)\odot V^\alpha(z,z_0) + V^\alpha(z,z_0) \frac{\pa_z\E\,[V^\alpha(z,z_0)]}{\E\,[V^\alpha(z,z_0)]},$$
where
$$\frac{\pa_z\E\,[V^\alpha(z,z_0)]}{\E\,[V^\alpha(z,z_0)]} = \frac{\alpha^2}2\frac1{\sinh\dfrac{z-z_0}2}.$$
This completes the proof.
\end{proof}

\subsubsec{Rooted vertex fields} 
We now freeze $z_0$ and consider $V^\alpha(z,z_0)$ as fields of one variable $z.$
With the choice $z_0 = q_-,$ we define the rooted vertex fields $V_\star^\alpha$ by
$$V_\star^\alpha(z) = V^\alpha(z,q_-)$$
provided that we require $w(\zeta)\sim-1/(\zeta-q_-)$ as $\zeta\to q_-$ in a fixed boundary chart at $q_-.$
In terms of a uniformization $w:(D,q_-,q_+)\to(\S,-\infty,\infty)$ with the above requirement, we have a simple expression
\begin{equation} \label{eq: Vstar}
V_\star^\alpha(z) = (w'(z))^{-\alpha^2/2}\,e^{\odot\alpha\Phiplus(z,q_-)}.
\end{equation}

 We now state Ward's identities for rooted vertex fields.
\begin{prop}
In the $(\H,-1,1)$-uniformization, we have
$$\E\,[W^+(v_\zeta)\, V_\star^\alpha(z) X_1(z_1)\cdots X_n(z_n)] = \E\,[\LL^+_{v_\zeta}(V_\star^\alpha(z) \,X_1(z_1)\cdots X_n(z_n))]$$
hold true provided that $X_j\in\FF$ and $z,z_j$'s are in $\H\sm\{\zeta\}.$
\end{prop}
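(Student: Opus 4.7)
The plan is to repeat the derivation of Ward's identity~\eqref{eq: Ward} verbatim, with the extra factor $V_\star^\alpha(z)$ inserted into every correlation. The essential new ingredient is that $V_\star^\alpha$ belongs to the Ward family $\FF(T)$ at every interior point $z\in\H$: by the preceding proposition its singular OPE with $T$ is that of a holomorphic $(-\alpha^2/2)$-differential, so the interior residue calculus that extracts $\LL^+_{v_\zeta}$ acting on $V_\star^\alpha(z)$ proceeds exactly as for a differential in $\FF$.

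The key steps are as follows. Starting from the representation~\eqref{eq: Wplus}, take expectations against $V_\star^\alpha(z)\prod_j X_j(z_j)$. Using $\bp v_\zeta = -\tfrac{\pi(1-\zeta^2)^2}{2}\delta_\zeta$, the bulk term collapses to $\tfrac{(1-\zeta^2)^2}{2}\,\E[A(\zeta)V_\star^\alpha(z)\prod_j X_j(z_j)]$. Because $J$ is still purely imaginary on the Dirichlet arc and real on the Neumann arc regardless of the inserted string, $A$ remains real on $\partial\H$, and \eqref{eq: Wvbar} continues to convert the boundary integral on $\R$ into $-\E[W^-(v_{\bar\zeta})V_\star^\alpha(z)\prod_j X_j(z_j)]$. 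Combining these two ingredients gives the representation~\eqref{eq: A=} inside the enlarged correlation. Reading off the residues at $\eta=z$ from~\eqref{eq: OPE4V} and at each $\eta=z_j$ from Ward's OPE for $X_j\in\FF$, and then applying Leibniz's rule to reassemble the terms, produces the right-hand side $\E[\LL^+_{v_\zeta}(V_\star^\alpha(z)\prod_j X_j(z_j))]$.

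The principal obstacle will be justifying the boundary manipulations near $q_\pm$ in the presence of the insertion rooted at $q_-$. The meromorphic vector field $v_\zeta$ has a simple zero at each of $q_\pm=\mp1$, which already neutralizes the generic boundary behavior; at $q_+$ no field is rooted and the correlation is regular. The delicate point is $q_-$: a direct Wick computation, using the explicit $\sinh$-formulas for $\E[J(\eta)\Phiplus(z,q_-)]$ derived in the proof of the preceding proposition, shows that the contraction rooted at $q_-$ decays as $w(\eta)\to-\infty$, and hence that $\E[A(\eta)V_\star^\alpha(z)\prod_j X_j(z_j)]$ is integrable against $v_\zeta$ along $\R$ near $q_-$. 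This decay, combined with the zero of $v_\zeta$ at $q_-$, yields both absolute convergence of the boundary integral and the vanishing of the small-circle contribution around $q_-$ in the limit; the contour manipulation then goes through exactly as in the chordal case of~\cite{KM11}.
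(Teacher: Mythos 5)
Your argument reaches the right statement, but by a genuinely different route than the paper. The paper never re-runs the contour derivation with the insertion: it realizes $V_\star^\alpha$ as the renormalized limit $\lim_{z_0\to q_-}z_0^{-\alpha^2/2}V^\alpha(z,z_0)$ in the $(\H,0,\infty)$-chart, uses Ward's identity for the bi-vertex string (legitimate since \eqref{eq: OPE4V} makes $V^\alpha(\cdot,z_0)$ a Ward-family differential in both variables), and then shows that the extra Lie-derivative term at the root, $z_0^{-\alpha^2/2}\bigl(v_\zeta(z_0)\pa_{z_0}-\frac{\alpha^2}2 v_\zeta'(z_0)\bigr)V^\alpha(z,z_0)$, vanishes in the limit because $\pa_{z_0}\log\E V^\alpha(z,z_0)=\frac{\alpha^2}{2z_0}+O(1)$ and $v_\zeta$ has a simple zero at $q_-$, so the two $\frac{\alpha^2}2 v_\zeta'(q_-)$ contributions cancel exactly. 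You instead redo the residue computation of $\E[W^+(v_\zeta)\,V_\star^\alpha(z)X]$ directly, controlling the root by pole/zero counting: each $J$-leg of $A$ contracts to at worst an inverse-square-root singularity at $q_\pm$ in half-plane coordinates, so the correlation has at most a simple pole there, which the simple zero of $v_\zeta$ neutralizes. Both proofs hinge on that zero; the paper's version buys economy (no new estimates, and it exhibits the normalization exponent $-\alpha^2/2$ as exactly what the zero compensates), while yours buys a self-contained derivation that does not presuppose Ward's identity for bi-vertex strings.

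Three points need repair before your version is watertight. First, the step ``$A$ remains real on $\pa\H$, so \eqref{eq: Wvbar} still converts the boundary integral'' is both unnecessary for this proposition and false for general $\alpha\in\C$: under the chiral insertion the boundary values of $J$-correlations acquire an $\alpha$-dependent shift (in the strip, $\wh\E\,J(x)=-\frac{\alpha}{2\sinh(x/2)}$), which is purely imaginary only when $\alpha\in i\R$; this step belongs to the derivation of \eqref{eq: A=} and Lemma~\ref{Ward4Y}, not to Ward's identity, and you should drop it here. Second, your claim that the correlation is regular at $q_+$ is wrong --- $A$-correlations generically have a simple pole at both marked points (cf.\ the displayed formula for $\E[A(\zeta)\Phi(z_1)\odot\Phi(z_2)]$) --- and likewise your decay statement at $q_-$ is a strip-coordinate statement: after the $w'^2$ Jacobian the half-plane correlation still has a simple pole at the root. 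The conclusion survives in both cases only because of the simple zeros of $v_\zeta$, so state the bound as ``at most a simple pole'' and argue from there. Third, \eqref{eq: OPE4V} is proved for $z_0\ne q_-$; to read off the residue at $\eta=z$ for the rooted field you need Ward's OPE for $V_\star^\alpha$ itself, obtained either by passing to the limit $z_0\to q_-$ or by the direct Wick computation carried out (for $\alpha=ia$) in the proof of Lemma~\ref{level2}.
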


\begin{proof}
We perform the computation in the $(\H,0,\infty)$-uniformization.
The definition of rooted vertex fields can be arrived to by the renormalization procedure:
$$V_\star^\alpha(z)=\lim_{z_0\to0}z_0^{-\alpha^2/2}V^\alpha(z,z_0).$$
We need to show that for a vector field $v$ with $v(0)=0,$
$$z_0^{-\alpha^2/2}(v(z_0)\pa_{z_0}V^\alpha(z,z_0)-\frac{\alpha^2}2v'(z_0)V^\alpha(z,z_0))$$
tends to zero (within correlations) as $z_0\to0.$
A direct computation shows that it converges to
$$V_\star^\alpha(z)\Big(\lim_{z_0\to0} \frac{\pa_{z_0}\E\,[V^\alpha(z,z_0)]}{\E\,[V^\alpha(z,z_0)]} v(z_0)-\frac{\alpha^2}2v'(z_0)\Big).$$
In the $(\H,0,\infty)$-uniformization,
$$\E\,[V^\alpha(z,z_0)]= \Big(\frac1{\sqrt{\mathstrut z}\sqrt{\mathstrut z_0}}\frac{\sqrt{\mathstrut z} + \sqrt{\mathstrut z_0}}{\sqrt{\mathstrut z} - \sqrt{\mathstrut z_0}}\Big)^{-\alpha^2}$$
and
$$\frac{\pa_{z_0}\E\,[V^\alpha(z,z_0)]}{\E\,[V^\alpha(z,z_0)]} = \frac{\alpha^2}2\frac1{z_0} +O(1)$$
as $z_0\to0.$
Proposition now follows since the vector field $v_\zeta$ has a simple zero at $q_-.$
\end{proof}

\subsubsec{Ward's equations and level two degeneracy equation for $V_\star^{ia}$} 
Combining the argument in the proof of Lemma~\ref{Ward4Y} and the previous proposition,
we have the following version of Ward's equations.
(Since the rooted vertex fields are holomorphic, $\LL_{v_{\bar z}}^-V_\star^\alpha(z)=0.$)

\begin{lem} \label{Ward}
Let $X_1,\cdots, X_n\in \FF$ and let $X$ be the tensor product of $X_j$'s.
Then
\begin{align*}
\E\,[&V_\star^\alpha(z)(\LL_{v_z}^+X +\LL_{v_{\bar z}}^-X)]\\
&=\frac{(1-z^2)^2}{2}\,\E[(L_{-2}V_\star^\alpha)(z)X]
-\frac{3z(1-z^2)}2\,\E[(L_{-1}V_\star^\alpha)(z)X] \\
& +\frac{3z^2-1}2\,\E[(L_0V_\star^\alpha)(z)X]+ \frac{z}2\,\E[(L_1V_\star^\alpha)(z)X] - \frac1{8}\,\E[V_\star^\alpha(z)X],
\end{align*}
where all fields are evaluated in the identity chart of $\H.$
\end{lem}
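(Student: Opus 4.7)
The plan is to run the argument of Lemma~\ref{Ward4Y} again, this time with $V_\star^\alpha$ playing the role of $Y$. The representation \eqref{eq: A=} of $A(\zeta)$ as a combination of Ward's functionals and the local expansion \eqref{eq: sing OPE} of $\LL_{v_\zeta}^+$ are independent of the field they act on and may be imported verbatim. The essential new input is Ward's identity for rooted vertex fields supplied by the previous proposition (and its conjugate for $W^-(v_{\bar\zeta})$), which lets $W^\pm$ be applied inside $\E[\,\cdot\, V_\star^\alpha(z) X]$ in spite of the singularity of $V_\star^\alpha$ at the root $q_-$.

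Combining \eqref{eq: A=} with that Ward's identity and Leibniz's rule yields
$$\E[A(\zeta) V_\star^\alpha(z) X] = \frac{2}{(1-\zeta^2)^2}\bigl(\E[(\LL_{v_\zeta}^+ V_\star^\alpha)(z) X] + \E[V_\star^\alpha(z)(\LL_{v_\zeta}^+ X + \LL_{v_{\bar\zeta}}^- X)]\bigr),$$
where the cross-term $(\LL_{v_{\bar\zeta}}^- V_\star^\alpha)(z) X$ drops out because $V_\star^\alpha$ is a holomorphic differential. This simplification is precisely what makes the left-hand side of the lemma take the clean form $\E[V_\star^\alpha(z)(\LL_{v_z}^+ X + \LL_{v_{\bar z}}^- X)]$ rather than the Leibniz-expanded form appearing in Lemma~\ref{Ward4Y}. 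Subtracting the singular OPE via \eqref{eq: sing OPE} and taking $\zeta \to z$ produces, exactly as in the proof of Lemma~\ref{Ward4Y}, the identity
$$\E[V_\star^\alpha(z)(\LL_{v_z}^+ X + \LL_{v_{\bar z}}^- X)] = \frac{(1-z^2)^2}{2}\E[(A * V_\star^\alpha)(z) X] - \frac{3z(1-z^2)}{2}\E[(A *_{-1} V_\star^\alpha)(z) X] + \frac{3z^2-1}{2}\E[(A *_{-2} V_\star^\alpha)(z) X] + \frac{z}{2}\E[(A *_{-3} V_\star^\alpha)(z) X].$$

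To finish, I would convert the OPE coefficients $A *_j V_\star^\alpha$ into Virasoro modes $L_{-j-2} V_\star^\alpha$ using \eqref{eq: TinH} and \eqref{eq: LT}. In the identity chart of $\H$ one has $S_w = 0$ and $w' = 1$, so $T - A = \frac{1}{4(1-z^2)^2}$; the singular coefficients $A *_{-1}, A *_{-2}, A *_{-3}$ are unchanged while the regular part satisfies $A * V_\star^\alpha = L_{-2} V_\star^\alpha - \frac{V_\star^\alpha}{4(1-z^2)^2}$, and multiplying by $\frac{(1-z^2)^2}{2}$ contributes the final $-\frac{1}{8}\E[V_\star^\alpha(z) X]$ term. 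The only point of delicacy is the legitimacy of Ward's identity at the root $q_-$, which is exactly what the previous proposition establishes using the simple zero of $v_\zeta$ at $q_\pm = \pm 1$; no new estimate is needed here.
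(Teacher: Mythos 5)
Your proposal is correct and follows exactly the route the paper intends: it reruns the argument of Lemma~\ref{Ward4Y} with $V_\star^\alpha$ in place of $Y$, invoking the Ward identity for rooted vertex fields from the preceding proposition and the holomorphicity of $V_\star^\alpha$ (so that $\LL_{v_{\bar z}}^- V_\star^\alpha = 0$), which is precisely the paper's one-line proof. Your conversion of $A*_j V_\star^\alpha$ into Virasoro modes via \eqref{eq: TinH} and \eqref{eq: LT}, including the $-\tfrac18\,\E[V_\star^\alpha(z)X]$ term coming from $T-A=\tfrac1{4(1-z^2)^2}$ in the identity chart, matches the computation already carried out at the end of the proof of Lemma~\ref{Ward4Y}.
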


 We now compute $L_nV_\star^\alpha\,(n=-2,-1,0,1)$ in the case when $\alpha = ia\,(a = 1/\sqrt{2}).$
From the algebraic point of view, level two degeneracy equation for $V_\star^{ia}\,(a = 1/\sqrt{2})$ follows from the fact that $V_\star^{ia}$ is a holomorphic Virasoro primary field with conformal dimension $a^2/2$ and a current primary field with charge $a$ (see \cite[Appendix~11]{KM11}).
We rather present the direct and analytic proof of level two degeneracy equation for $V_\star^{ia}.$

\begin{lem}\label{level2} If $a=1/\sqrt2,$ then
$$T*V_\star^{ia}=\partial^2 V_\star^{ia}.$$
\end{lem}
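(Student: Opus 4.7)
I will compute $T \ast V_\star^{ia}(z) = L_{-2} V_\star^{ia}(z)$ by reading off the $(\zeta - z)^0$ coefficient in the operator product $T(\zeta) V_\star^{ia}(z)$ via Wick's calculus, and compare it with $\partial^2 V_\star^{ia}(z)$ obtained by direct differentiation. The calculation is cleanest in the strip chart $(\S, -\infty, \infty)$, where $w = \mathrm{id}$ and the representation of $T$ from \eqref{eq: TinS} collapses to $T = -\tfrac12 J \odot J + \tfrac1{48}$. In the same chart $V := V_\star^{ia}(z) = e^{\odot ia \Phi^+(z, q_-)}$, and \eqref{eq: EJPhi} specialized to $z_0 = q_- = -\infty$ yields
$$g(\zeta, z) := \E[J(\zeta) \Phi^+(z, q_-)] = -\frac{1}{2\sinh((\zeta - z)/2)}.$$

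Applying Wick's theorem to $J \odot J(\zeta) \cdot V(z)$ and substituting into $T(\zeta)$,
$$T(\zeta) V(z) = \tfrac{a^2}{2} g(\zeta, z)^2 V(z) - i a \, g(\zeta, z)\, J(\zeta) \odot V(z) - \tfrac12 J \odot J(\zeta) \odot V(z) + \tfrac1{48} V(z).$$
Expanding $g(\zeta, z) = -\tfrac{1}{\zeta - z} + \tfrac{\zeta - z}{24} + O((\zeta - z)^3)$ and $g(\zeta, z)^2 = \tfrac{1}{(\zeta - z)^2} - \tfrac{1}{12} + O((\zeta - z)^2)$, I extract the $(\zeta - z)^0$ coefficient. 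The singular parts reproduce Ward's OPE with conformal dimension $\lambda = a^2/2$, and the constant contribution from $\tfrac{a^2}{2} g^2$ is $-\tfrac{a^2}{24} V$; for $a^2 = 1/2$ this cancels exactly the $+\tfrac1{48} V$ coming from the Virasoro anomaly at $c = 1$. What survives is
$$T \ast V_\star^{ia}(z) = i a \, \partial J(z) \odot V_\star^{ia}(z) - \tfrac12 J \odot J(z) \odot V_\star^{ia}(z).$$

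On the other hand, differentiating the Wick exponential gives $\partial V_\star^{ia} = i a \, J \odot V_\star^{ia}$, whence $\partial^2 V_\star^{ia} = i a \, \partial J \odot V_\star^{ia} - a^2 J \odot J \odot V_\star^{ia}$, which matches $T \ast V_\star^{ia}$ precisely under $a^2 = 1/2$. The one delicate point -- indeed the only place where the value $a = 1/\sqrt{2}$ is forced -- is the cancellation between the Schwarzian-type anomaly $\tfrac1{48}$ in $T$ and the subleading term $\tfrac{\zeta-z}{24}$ in the Laurent expansion of $g$; this is the analytic manifestation of the level-two null-vector condition $h = h_{1,2} = 1/4$ at central charge $c = 1$. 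Everything else is routine Wick bookkeeping.
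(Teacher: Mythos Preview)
Your proof is correct and follows essentially the same route as the paper's --- Wick's calculus for $T(\zeta)V(z)$ together with direct differentiation of the Wick exponential --- but with the simplification of working in the identity chart of $\S$ (so $w'=1$, $S_w=0$, and $V_\star^{ia}=e^{\odot ia\Phi^+(z,q_-)}$), whereas the paper carries a general uniformization $w$ through the computation. This specialization is legitimate since both $T*V_\star^{ia}$ and $\partial^2 V_\star^{ia}$ are well-defined Fock space fields and agreement in one global chart forces agreement everywhere; the paper's general-$w$ version simply verifies this chartwise.
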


\begin{proof}
We write $V$ for $V_\star^{ia}.$
Recall that 
$V(z) = (w'(z))^{a^2/2}\,e^{\odot ia\Phiplus(z,q)}, q = q_-,$ and 
$$T = A+ \frac{1}{12}S_w+ \frac1{48}w'^2,\quad A = -\frac12 J\odot J,$$
where $w$ is a conformal map from $(D,q_-,q_+)$ onto $(\S,-\infty,\infty).$ 
It follows from direct computation that 
$$\pa V = iaJ \odot V+\frac{a^2}2\frac{w''}{w'} V,$$
and
$$\pa^2 V = A \odot V + ia(\pa J )\odot V + \frac{ia}{2}\frac{w''}{w'} J \odot V
+\frac{a^2}2\Big(\frac{w'''}{w'} -\frac34\frac{w''^2}{w'^2}\Big)V.$$
On the other hand, by Wick's calculus,
\begin{align*}
A(\zeta)V(z)&= A(\zeta)\odot V(z)-ia\,\E[J(\zeta)\Phiplus(z,q)] \,J(\zeta)\odot V(z) \\
&+\frac12 a^2\, \E[J(\zeta)\Phiplus(z,q)]^2\, V(z).
\end{align*}
As $\zeta\to z,$ up to $o(1)$ terms, we have
$$\E[J (\zeta)\Phiplus (z,q)]=-\frac{w'(\zeta)}{2\sinh\dfrac{w(\zeta)-w(z)}2} = -\frac1{\zeta-z} - \frac12\frac{w''(z)}{w'(z)}$$
and
$$(\E[J (\zeta)\Phiplus (z,q)])^2=\frac1{(\zeta-z)^2} + \frac{w''(z)}{w'(z)}\frac1{\zeta-z} -\frac1{12}w'(z)^2 - \frac14\frac{w''(z)^2}{w'(z)^2}+\frac23\frac{w'''(z)}{w'(z)}.$$
Thus we have
$$A*V = A \odot V+ia(\pa J )\odot V+\dfrac{ia}2\dfrac{w''}{w'} J \odot V
+ \frac{a^2}2 \Big(-\frac1{12}w'^2 - \frac14\frac{w''^2}{w'^2}+\frac23\frac{w'''}{w'}\Big)V.
$$
Lemma now follows since $S_w = (w''/w')'- (w''/w')^2/2 = w'''/w'- 3(w''/w')^2/2.$
\end{proof}

\begin{rmk*}
It is immediate from the proof of the previous lemma that 
$$L_1V = 0, \qquad L_0V = h V, \qquad L_{-1}V = \pa V.$$
For example,
$$L_{-1}V = T*_{-1}V = iaJ \odot V+\frac{a^2}2\frac{w''}{w'} V = \pa V.$$
\end{rmk*}

\medskip \section{Connection to dipolar SLE(4)}
After we briefly study how the insertion of Wick's exponential $e^{\odot ia\Phiplus(p,q_-)}$ acts as a boundary condition changing operator on Fock space fields, we prove the main theorem that the correlation functions of fields in the OPE family $\FF$ of the Gaussian free field $\Phi$ with mixed boundary condition under the insertion of $e^{\odot ia\,\Phiplus(p,q_-)}\,(a = 1/\sqrt2)$ are martingale-observables for dipolar SLE(4).

\medskip \subsection{Boundary condition changing operator}

The insertion of $V_\star^{ia}(p)\,(a = 1/\sqrt2)$ produces an operator
$$\XX\mapsto\widehat\XX$$
on Fock space functionals/fields.
By definition, this correspondence is given by the formula
\begin{equation} \label{eq: BC} \index{bosonic field $\Phi, \widehat\Phi$}
\widehat\Phi=\Phi+2ia\,\Gchiral(p,z), \qquad (a = \frac1{\sqrt2}),
\end{equation}
and the rules
\begin{equation} \label{eq: BCrules}
\pa\XX\mapsto\pa\widehat\XX,\quad \bp\XX\mapsto\bp\widehat\XX,\quad\alpha\XX+\beta\YY\mapsto \alpha\widehat\XX+\beta\widehat\YY,\quad \XX\odot\YY\mapsto\widehat\XX\odot\widehat\YY.
\end{equation}
If $w:(D, p,q_-,q_+)\to (\S, 0,-\infty,\infty)$ is a conformal map, then
$$2ia\,\Gchiral(p,z)=2a\arg \tanh \frac{w(z)}4.$$
We denote by $\widehat \FF$ the image of $\FF$ under this correspondence and denote
$$\widehat \E[\XX]:=\frac{\E [V_\star^{ia}(p)\XX]}{\E [V_\star^{ia}(p)]}=\E [e^{\odot ia \Phiplus (p,q_-)}\XX].$$

As in \cite{KM11}, one can prove the following proposition by induction.
\begin{prop}
Let $\widehat\XX\in \widehat \FF$ correspond to the string $\XX\in \FF$ under the map given by \eqref{eq: BC} and \eqref{eq: BCrules}.
Then
\begin{equation} \label{eq: hat E=E hat}
\widehat \E[\XX]=\E[\widehat\XX].
\end{equation}
\end{prop}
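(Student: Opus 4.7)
The plan is to prove \eqref{eq: hat E=E hat} by induction on the complexity of $\XX\in\FF,$ following the chordal template of \cite{KM11}. Recall that $\FF$ is generated over $\C$ by $1$, the derivatives $\pa^j\bp^k\Phi,$ and the OPE exponentials $\pa^j\bp^k e^{*\alpha\Phi}$ under OPE multiplication. Both sides of \eqref{eq: hat E=E hat} respect linear combinations and the action of $\pa,\bp,$ since the hat-map commutes with differentiation and linear combinations by \eqref{eq: BCrules}, and $\E$ does as well. It therefore suffices to verify the identity on the basic building blocks $\Phi$ and $e^{\odot\alpha\Phi},$ and to check that both sides transform consistently under Wick products $\odot.$

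For the base cases I would use the Gaussian computation. For $\XX=\Phi(z),$ Wick's theorem gives
$$\E\,[e^{\odot ia\Phiplus(p,q_-)}\Phi(z)]=\E\,[ia\Phiplus(p,q_-)\Phi(z)]=2ia\,\Gchiral(p,z),$$
using \eqref{eq: G^+} together with the boundary normalization $\Gchiral(q_-,z)=0$ built into the rooted-vertex renormalization defining $V_\star^{ia}.$ This matches $\E\,[\widehat\Phi(z)]=2ia\,\Gchiral(p,z)$ coming directly from \eqref{eq: BC}. For Wick exponentials the Gaussian identity $\E\,[e^{\odot L}e^{\odot M}]=e^{\,\E[LM]}$ yields
$$\E\,[e^{\odot ia\Phiplus(p,q_-)}e^{\odot\alpha\Phi(z)}]=e^{2ia\alpha\,\Gchiral(p,z)}=\E\,[e^{\odot\alpha\widehat\Phi(z)}],$$
and the $*$-exponentials then follow from the identity $e^{*\alpha\Phi}=C^{\alpha^2}e^{\odot\alpha\Phi}$ since $C$ is non-random and hence hat-invariant.

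For the inductive step, the compatibility with Wick products is the real content. At distinct evaluation points this is Wick's theorem in the guise of the Cameron--Martin/Girsanov identity for Gaussian measures: the insertion $e^{\odot ia\Phiplus(p,q_-)}$ tilts the free-field measure so that $\Phi$ acquires the mean $2ia\,\Gchiral(p,\cdot)$ while the covariance is unchanged, which is precisely the effect of the hat-map under \eqref{eq: BC}--\eqref{eq: BCrules}. Passing from $\odot$-products at separated points to $*$-products at coincident points is a Taylor subtraction of non-random OPE coefficients; since these coefficients are determined by the (unchanged) Green's function, they commute with the hat-map, so the induction extends to all OPE-generated strings exactly as in \cite[Section~12.2]{KM11}.

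The main technical obstacle is the rooted nature of the insertion at $q_-.$ The bi-variant chiral field $\Phiplus(p,q_-)$ is strictly speaking defined only through the renormalized limit $V_\star^{ia}(p)=\lim_{z_0\to q_-}z_0^{-a^2/2}V^{ia}(p,z_0)$ in a fixed boundary chart, and one must verify that the divergent boundary factors cancel against the normalizing $\E\,V_\star^{ia}(p)$ in the denominator of $\widehat\E,$ so that the effective mean shift in the tilted Gaussian measure is exactly $2ia\,\Gchiral(p,\cdot)$ with no residual contribution at $q_-.$ Once this renormalization is handled at the level of $\Phi,$ the induction propagates mechanically through $\pa,\bp,$ linear combinations, Wick products, and OPE products.
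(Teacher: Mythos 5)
Your proposal is correct and follows essentially the same route as the paper, which itself only remarks that the proposition is proved ``by induction, as in \cite{KM11}'': your base cases via Wick's theorem, the inductive step through $\pa,\bp$, linear combinations, Wick products, and coincident-point (OPE) limits, and the attention to the renormalization at $q_-$ are exactly the content of that induction. Nothing further is needed.
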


\medskip \noindent\textbf{Examples:} \quad
\renewcommand{\theenumi}{\alph{enumi}}
{\setlength{\leftmargini}{2.0em}
\begin{enumerate}
 \item The current $\widehat J$ is a $(1,0)$-differential,
$$\widehat J = J -\frac{ia}2\frac{w'}{\sinh(w/2)}.$$
In the $(\S,0,-\infty,\infty)$-uniformization, 
$$\hat\jmath(z):=\widehat \E\, J(z)=-\dfrac{ia}2\dfrac1{\sinh(z/2)};$$
\item The Virasoro field $\widehat T$ is a Schwarzian form of order $1/{12},$
\begin{align*}
\widehat T &= -\dfrac12\widehat J*\widehat J = T + \frac{ia}2\frac{w'}{\sinh{(w/2)}} J+ \frac{a^2}8\Big(\frac{w'}{\sinh{(w/2)}} \Big)^2 \\
&= -\frac12 J\odot J + \frac{ia}2\frac{w'}{\sinh(w/2)} J+ \dfrac{1}{12}Sw + \Big(\frac1{48} + \frac1{16}\frac{1}{\sinh^2{(w/2)}} \Big) w'^2.
\end{align*}
In the $(\S,0,-\infty,\infty)$-uniformization, 
$$\widehat \E\, T(z)= \dfrac1{48} + \dfrac1{16}\dfrac{1}{\sinh^2{(z/2)}} ;$$
\item The non-chiral vertex field $\widehat\VV^\alpha$ is a $(-\alpha^2/2,-\alpha^2/2)$-differential,
$$\widehat\VV^\alpha=e^{2\alpha a\arg \tanh\frac w4}\VV^\alpha = e^{2\alpha a\arg \tanh\frac w4}C^{\alpha^2}e^{\odot\alpha\Phi}.$$
In the $(\S,0,-\infty,\infty)$-uniformization,
$$\widehat \E\, \VV^\alpha(z)=(4\tan\frac y2)^{\alpha^2}e^{2\alpha a\arg \tanh\frac z4};$$
\item The bi-vertex field $\widehat V^\alpha(z, z_0)$ is a $-\alpha^2/2$-differential in both variables,
$$\qquad\widehat V^\alpha(z, z_0)=\left(\dfrac{w'w'_0}{\tanh^2((w-w_0)/4)}\right)^{-\alpha^2/2}\,\left(\dfrac{\tanh(w/4)}{\tanh(w_0/4)}\right)^{-i\alpha a}\,e^{\odot \alpha\Phiplus(z,z_0)},$$
where $w = w(z), w_0=w(z_0),w' = w'(z), w_0'=w'(z_0).$
We have 
$$\widehat \E \,V^\alpha(z, z_0)=(\tanh((z-z_0)/4))^{\alpha^2}(\tanh(z/4)/\tanh(z_0/4))^{-i\alpha a}$$
in the $(\S,0,-\infty,\infty)$-uniformization.
\end{enumerate}}

\medskip \subsection{Dipolar SLE(4) martingale-observables}
We now prove the main theorem.
\begin{proof}[Proof of Theorem~\ref{main}]
Let us denote
$$R_\xi(z_1,\cdots, z_n)\equiv\wh\E_\xi[X]=\E[e^{\odot ia \Phiplus(\xi,q_-)}X],$$
where $X = X_1(z_1)\cdots X_n(z_n).$
We represent the process
$$M_t(z_1,\cdots, z_n)=M_{D_t,\gamma_t,q_-,q_+}(z_1,\cdots, z_n)$$
as 
$$M_t = m(\wt\xi_t,t), \quad m(\xi,t) =\big(R_\xi\,\|\,\wt g_t^{-1}\big), \quad \wt\xi_t = \frac{e^{2 B_t}-1}{e^{2 B_t}+1},\quad \wt g_t = \frac{e^{g_t}-1}{e^{g_t}+1}.$$
Applying It\^o's formula (note that the function $m(\xi,t)$ is smooth in both variables) to the process $m(\wt\xi_t,t)$, it is easy to see that the drift term of $dM_t$ is equal to 
$$\Big(\big(\frac{(1-\xi^2)^2}2\pa_\xi^2-\xi(1-\xi^2)\pa_\xi\big)\Big|_{\xi=\wt\xi_t}~m(\xi,t)\Big)\,dt+L_t\, dt,$$
where
$$
L_t:=\frac d{ds}\Big|_{s=0}\big(R_{\wt\xi_t}\,\|\,\wt g_{t+s}^{-1}\big)
=\frac d{ds}\Big|_{s=0}\big(R_{\wt\xi_t}\,\|\,\wt g_t^{-1}\circ f_{s,t}^{-1}\big),\qquad (f_{s,t} = \wt g_{t+s}\circ \wt g_t^{-1}).
$$
Since the time-dependent flow $f_{s,t}$ satisfies
$$\frac{d}{ds}f_{s,t}(\zeta) = -\frac{1-f_{s,t}^2(\zeta)}2\frac{1-\wt\xi_{t+s}f_{s,t}(\zeta)}{\wt\xi_{t+s}-f_{s,t}(\zeta)},$$
we have an approximation of $f_{s,t}:$
$$f_{s,t} = \id - sv_{\wt\xi_t} + o(s)\qquad(\textrm{as } s\to0),\qquad v_{\xi}(z)=\frac{1-z^2}2\frac{1-\xi z}{\xi-z}.$$

We can represent $L_t$ in terms of Lie derivative of $R_{\wt\xi_t}$ in the global chart $\wt g_t^{-1}$
(since the fields in $\FF$ depend smoothly on local charts):
$$L_t=-\big(\LL_{v_{\wt\xi_t}}R_{\wt\xi_t}\,\|\,\wt g_{t}^{-1}\big).$$
The driftlessness of $dM_t$ follows from the BPZ-Cardy equation:
\begin{equation} \label{eq: BPZ-Cardy}
\wh\E_\xi[\LL_{v_\xi}X]=\Big(\frac{(1-\xi^2)^2}2\pa_\xi^2-\xi(1-\xi^2)\pa_\xi\Big) \wh\E_\xi [X],
\end{equation}
where $\pa_\xi $ is the operator of differentiation with respect to the real variable $\xi.$

To show \eqref{eq: BPZ-Cardy}, denote
$$V = V_\star^{ia},\qquad (a = \frac1{\sqrt{2}}).$$
In terms of a uniformizing map $w:(D,p,q_-,q_+) \to (\H,0,-1,1),$ (up to constant)
$$V(z) = \Big(\frac{w'(z)}{1-w(z)^2}\Big)^{a^2/2}\,e^{\odot ia\Phiplus(z,q_-)}$$
(cf. \eqref{eq: Vstar}).
Since $R_\xi$ does not depend on the boundary chart at $q_-=-1$ where we normalize $V_\star^{ia},$ we can assume that for $\xi\in(-1,1)$
$$V(\xi) = (1-\xi^2)^{-h} e^{\odot ia\Phiplus[\tau]}, \qquad(h=\frac{a^2}2 = \frac14),$$
where $\tau$ is the half-line $(-1,\xi).$ 
Therefore, $R_\xi=\E[(1-\xi^2)^{h}V(\xi)X].$
For $\zeta$ close to $\xi,$ denote
$$R_\zeta\equiv R(\zeta;z_1,\cdots, z_n)=\E [(1-\zeta^2)^h V(\zeta)X],\qquad (\zeta\in\H),$$
where we use a path from $q_-=-1$ to $\zeta$ so that $R_\xi = \lim_{\zeta\to \xi} R_\zeta.$

By Lemma~\ref{Ward} (Ward's equation for $V$), Lemma~\ref{level2} (the level two degeneracy equations for $V$), and its Remark ($L_1V = 0, L_0V = h V, L_{-1}V = \pa V$), we have 
\begin{align*}
(1-\zeta^2)^h (\E\,[&V(\zeta)\LL_{v_\zeta}^+X] + \E\,[V(\zeta)\LL_{v_{\bar \zeta}}^-X])=\Big(\frac{(1-\zeta^2)^2}2\pa_\zeta^2-\zeta(1-\zeta^2)\pa_\zeta\Big)R_\zeta.
\end{align*}
Since $\xi$ is real, the left-hand side converges to
$$\E\,[(1-\xi^2)^h V(\xi)\LL_{v_\xi}X] = \wh\E_\xi[\LL_{v_\xi}X]$$
as $\zeta\to \xi.$
On the other hand, since the field $V$ is holomorphic and $\pa_\xi = \pa +\bp,$ the right-hand side converges to
$$\Big(\frac{(1-\xi^2)^2}2\pa_\xi^2-\xi(1-\xi^2)\pa_\xi\Big)R_\xi$$
as $\zeta\to \xi.$
Thus we get \eqref{eq: BPZ-Cardy}.
\end{proof}

\begin{eg*}
The 1-point function $\wh\E\,\Phi(z)$ is Schramm's observable, i.e.,
$$\P[\,z~\textrm{is to the left of SLE(4) curve}~\gamma~\textrm{in}~\S\,] = \frac1\pi \arg\tanh\frac z4.$$
In particular, the above probability with $z=x+\pi i\in\R+\pi i$ gives the distribution of $\gamma(\infty).$ See \cite{BBH05} and \cite{Zhan04}.
\end{eg*}

\medskip 
\def\cprime{$'$}


\end{document}